\documentclass[10pt,a4paper]{article}
\usepackage[utf8]{inputenc}
\usepackage{amsmath}
\usepackage{amsfonts}
\usepackage{amssymb}
\usepackage{amsthm}
\usepackage{xcolor}
\usepackage{enumitem}

\newtheorem{theorem}{Theorem}
\newtheorem{corollary}[theorem]{Corollary}

\newtheorem{lemma}[theorem]{Lemma}
\newtheorem{proposition}[theorem]{Proposition}
\newtheorem{remark}[theorem]{Remark}

\newcommand{\Pic}{\operatorname{Pic}}
\newcommand{\Proj}{\operatorname{Proj}}

\bibliographystyle{alpha}

\date{}

\begin{document}
\title{$\mathbb Z/2$-Godeaux surfaces}
\author{Eduardo Dias, Carlos Rito}
\maketitle

\begin{abstract}
We prove that the moduli space of numerical Godeaux surfaces with torsion group $\mathbb{Z}/2$ is irreducible and unirational of dimension 8. Moreover, we show that the topological fundamental group of these surfaces is also $\mathbb{Z}/2$. Our approach is based on the explicit construction of equations for all universal covers of numerical Godeaux surfaces with torsion group $\mathbb{Z}/2$.

\noindent 2020 MSC: 14J29

\end{abstract}

\section{Introduction}

Let $S$ be a smooth minimal complex algebraic surface. Its topological invariants are the geometric genus $p_g,$
the irregularity $q$ and the self-intersection $K^2$ of a canonical divisor. The holomorphic Euler characteristic is
$\chi=1+p_g-q.$ Gieseker \cite{Gieseker} has shown that for each pair $(\chi,K^2)$ and $S$ of general type, 
there exists a coarse moduli space $\mathcal M_{\chi,K^2}$ that is a quasi-projective variey.
Naturally geometers want to understand which of these families are non-empty, and then if possible to classify them.
It is frustrating that this has not been achieved even for the first case in the list, the one with $\chi=K^2=1.$

For these surfaces $p_g=q=0,$ and they are known to exist since Godeaux' construction in 1931 \cite{God31}.
Nowadays surfaces of general type with $p_g=q=0, K^2=1$ are called numerical Godeaux surfaces.
Miyaoka \cite{Miy76} showed that the order of their torsion group is at most 5, and Reid \cite{Rei78} excluded the case $(\mathbb Z/2)^2$, so
their possible torsion groups are $\mathbb Z/n$ with $1\leq n\leq 5$.
Reid constructed the moduli space for the cases $n=5,4,3$,
and it follows from his work that the topological fundamental group coincides with the torsion group for $n=5,4$.
Coughlan and Urz\'ua \cite{CU18} showed that the same happens for $n=3$. In those three cases the moduli space is irreducible of dimension 8.

Coughlan \cite{Coughlan} has obtained a family of $\mathbb Z/2$-Godeaux surfaces (i.e. with torsion group $\mathbb Z/2$)
depending on 8 parameters. More recently, we have studied with Urz\'ua \cite{DRU} all possible degenerations of $\mathbb Z/2$-Godeaux
surfaces into stable surfaces with one Wahl singularity,
which produces many boundary divisors of dimension 7 in the KSBA compactification of the moduli space of these surfaces.
This is done by means of abstract constructions
(i.e. showing the existence of particular singular surfaces with no obstructions in deformations),
and computational constructions based on Coughlan's family.
We have ended up proving in \cite{DRU} that Coughlan's family is at most 7 dimensional.

For the case of Godeaux surfaces with trivial torsion,
we know the examples due to Barlow \cite{BaSC}, Craighero-Gattazzo \cite{CG} (see also \cite{RTU}),
and Lee-Park type of constructions (cf. \cite{LP}).
Catanese and LeBrun \cite{CL} proved that the Barlow
surface belongs to an irreducible component of dimension 8,
and Catanese and Pignatelli \cite{CP} proved the same for the Craighero-Gattazzo surface.

Besides the work of several other authors, these cases $n=2,1$ are still open.
Schreyer and Stenger \cite{SS} constructed an
8-dimensional family of simply connected Godeaux surfaces, but without obtaining a full classification.

Catanese and Debarre \cite{CD} showed that the \'etale double covers of $\mathbb Z/2$-Godeaux surfaces
have hyperelliptic canonical curve and birational bicanonical map onto an octic in $\mathbb P^3,$
and they did a general study of its canonical ring.
That octic is given by the determinant of a certain matrix $\alpha.$

In this paper we continue their work.
Using an idea from Miles Reid \cite{Rei90}, we get more precise information about $\alpha$
by looking first to its restriction to the case of the canonical curve, then extending to the surface.
Then we give an algorithm for the computation of all such matrices,
from which we obtain equations for the \'etale double covers of all $\mathbb Z/2$-Godeaux surfaces.
We show that their moduli space is irreducible of dimension 8, which implies that the topological fundamental group
of $\mathbb Z/2$-Godeaux surfaces is also $\mathbb Z/2.$

We note that our method is not brute force computation: for the main algorithm, the calculations used only 32 MB of RAM memory,
and took 85 seconds on a low-end computer.

Recently two special $\mathbb Z/2$-Godeaux surfaces have appeared in the literature:
a $(\mathbb Z/3)^2$-quotient of a fake projective plane, constructed by Borisov and Fatighenti \cite{BF},
which has 4 cusp singularities; a degree 6 quotient of the so-called Cartwright-Steger surface,
given by Borisov-Yeung \cite{BY}, which has 3 cusp singularities and a certain configuration of rational curves.
As an example, we give the coordinates of these surfaces in our family.

All computations are implemented with Magma \cite{BCP}, and can be found in some arXiv ancillary files.
In particular, using the files\newline \verb+3_Verifications_alpha_i_c_j.txt+ one can choose any surface in the family
and compute its invariants and singular set.

At an advanced stage of the review process, F. Catanese informed us about some overlapping with his work,
compare our Proposition \ref{PropM} with \cite{CCO}.

\subsubsection*{Acknowledgments}

The authors were supported by Portuguese Funds through FCT (Funda\c c\~ao para a Ci\^encia e a Tecnologia)
under the project PTDC/MAT-GEO/2823/2014 and CMUP (UIDB/00144/2020).

The research of the second author was partially financed by Portuguese Funds through FCT (Funda\c c\~ao para a Ci\^encia e a Tecnologia) within the Project UID/00013/2025 (https://doi.org/10.54499/UID/00013/2025).

\section{Results from Catanese-Debarre}\label{CD}

We collect here some results from the paper \cite{CD} that will be used throughout the text.

Let $S$ be the \'etale double cover of a numerical Godeaux surface with torsion group $\mathbb Z/2,$
and denote the corresponding involution by $\sigma.$
The invariants of $S$ are $K^2=2,$ $p_g=1,$ $q=0.$
Define the canonical ring of $S$ as 
$$\mathcal R=\bigoplus_{n=0}^\infty H^0(S,nK_S),$$
and let $\mathcal A=\mathbb C[x,y_1,y_2,y_3]$ be the $\mathbb C$-graded algebra with
$$\deg(x)=1,\quad \deg(y_i)=2. $$

The involution $\sigma$ acts on $\mathcal R$, the canonical ring of $S$, splitting it into eigenspaces $\mathcal R=\mathcal R^+\oplus\mathcal R^-$.
Denoting the Godeaux surface by $T$, we have 
$$\mathcal R^+\cong\bigoplus_{n\geq 0} H^0(T,nK_T),\hspace{5mm}\mathcal R^-\cong\bigoplus_{n\geq 0} H^0(T,nK_T+\eta),$$
where $\eta\in \Pic(T)$ is a $2$-torsion element. Furthermore, by Riemann-Roch,
$$\dim \mathcal R_n^+=\dim \mathcal R_n^-=1+\binom{n}{2}, \text{ for } n\geq 2.$$
Throughout the paper we denote the set of generators of $\mathcal R$ by
(see \cite[Theorem 6.1]{CD})
$$
\begin{array}{c|c}
     & x\in \mathcal R_1^- \\
    x^2, y_2\in \mathcal R_2^+ & y_1, y_3 \in \mathcal R^-_2  \\
    xy_1, xy_3, z_2, z_4\in \mathcal R_3^+ & x^3, xy_2, z_1, z_3 \in \mathcal R^-_3 \\
     & t \in \mathcal R^-_4. \\
\end{array}
$$

Notice that the vector space $\mathcal R^+_4$ is $7$-dimensional and contains
$$\{x^4,x^2y_2,y_1^2,y_1y_3,y_2^2,y_3^2,xz_1,xz_3\}.$$
Then from \cite[Lemma 4.5]{CD} there is a unique relation on these generators.
By doing a change of variables on the generators of degree 3, this relation can be written as
$$Q(y_1,y_2,y_3)+\lambda x z_1,$$
where $Q$ is a quadratic form.

We have the following.

\begin{itemize}
  \item[(1)] The bicanonical map of $S$ is a birational morphism and its canonical curve is hyperelliptic
             (see \cite[Proposition 1.1, Theorem 6.1]{CD}).
  
  \item[(2)] $\mathcal R$ is a Cohen-Macaulay $\mathcal A$-module,
	which implies that $\mathcal R$ admits a length one free resolution of $\mathcal A$-modules that can be written as 
    $$0\longrightarrow \mathcal A(-8)\oplus\mathcal A(-5)^4\oplus\mathcal A(-4) \xrightarrow{\ \alpha\ }
    \mathcal A\oplus\mathcal A(-3)^4\oplus\mathcal A(-4)\longrightarrow \mathcal R\longrightarrow 0,$$
    where $\alpha$ is a matrix with homogeneous entries in $\mathcal A$
    (see \cite[Proposition 4.2, Theorem 4.3]{CD}).
    
  \item[(3)] The matrix $\alpha$ can be chosen symmetric of the form
  $$
  \alpha=\left(\begin{array}{c|cccc|c}
    x^2G & xq_1 & xq_2 & xq_3 & xq_4 &  Q \\ \hline
    xq_1 & a_{11} & a_{12} & a_{13} & a_{14} & x \\
    xq_2 & a_{12} & a_{22} & a_{23} & a_{24} & 0 \\
    xq_3 & a_{13} & a_{23} & a_{33} & a_{34} & 0 \\
    xq_4 & a_{14} & a_{24} & a_{34} & a_{44} & 0 \\ \hline
    Q & x & 0 & 0 & 0 & 0 \\ 
  \end{array}\right)
  $$
  where $G,$ $q_i,$ $a_{ij}$ are of degrees $3,2,1$ in $(y_0=x^2,y_1,y_2,y_3),$ respectively.
  The $3\times 3$-minors of $(a_{ij})$ are in the ideal $(x^2,Q),$ and $\det (a_{ij})$ is in $(x^2,Q^2).$
  (See \cite{CD}, proof of Theorem 4.6, case 4).)
  
  \item[(4)] The matrix $\alpha$ satisfies the following {\em rank condition}:
  
  (RC) For each cofactor $\beta_{ij}$ of $\alpha$ there exist $l_{ij}^k\in \mathcal A$ such that $$\beta_{ij}=\sum_{k=1}^6 l_{ij}^k\beta_{1k}$$
  (see \cite[Theorem 4.6]{CD}).
  
  \item[(5)] Conversely, for any matrix $\alpha$ belonging to an open subset of the set of matrices as in $(3)$ and $(4)$,
  it is possible to define a ring structure on the $\mathcal A$-module $\mathcal R$ which $\alpha$ defines.
  The surface $X=\Proj(\mathcal R)$ is the canonical model of a minimal surface $S$ with $K^2=2,$ $p_g=1,$ $q=0$ for which
  the bicanonical map is birational onto an octic in $\mathbb P^3$ with equation $\det(\alpha).$
  (See \cite{CD}, end of Section 4.)
\end{itemize}

Furthermore, from \cite[Theorem 4.3]{Cat_Comm}:
\begin{itemize}
  \item[(6)] The equations of $X$ are given by
  $$v_iv_j=\sum_{k=1}^6l_{ij}^kv_k,\ \ \ i,j=2,\ldots,6,$$ 
  $$\sum_{j=1}^6\alpha_{ij}v_j=0,\ \ \ i=1,\ldots,6,$$
  with $$v_1=1,v_2=z_1,v_3=z_3,v_4=z_2,v_5=z_4,v_6=t.$$
\end{itemize}

\section{The matrix $\alpha$}

\begin{proposition}\label{signalpha}
The matrix $\alpha$ can be written in the form
$$
\alpha=\left(\begin{array}{c|cccc|c}
    x^2G^- & xq_1^- & xq_2^- & xq_3^+ & xq_4^+ &  Q^+ \\ \hline
    xq_1^- & a_{11}^- & a_{12}^- & a_{13}^+ & a_{14}^+ & x \\
    xq_2^- & a_{12}^- & a_{22}^- & a_{23}^+ & a_{24}^+ & 0 \\
    xq_3^+ & a_{13}^+ & a_{23}^+ & a_{33}^- & a_{34}^- & 0 \\
    xq_4^+ & a_{14}^+ & a_{24}^+ & a_{34}^- & a_{44}^- & 0 \\ \hline
    Q^+ & x & 0 & 0 & 0 & 0 \\ 
\end{array}\right)
$$
with $$Q=y_1^2-y_2^2-d^2y_3^2,$$
and where the superscript signs mean $\sigma$-invariant $(+)$ or $\sigma$-anti-invariant $(-).$

\end{proposition}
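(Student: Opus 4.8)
We already know from item (3) that $\alpha$ can be taken symmetric with the displayed shape, with entries polynomial in $(y_0=x^2, y_1, y_2, y_3)$, and with $Q$ the distinguished quadric appearing in the free resolution. The new content of Proposition \ref{signalpha} is twofold: first, that the entries of $\alpha$ are eigenvectors for the $\sigma$-action with the specific sign pattern indicated; second, that the quadric $Q$ can be normalized to $y_1^2 - y_2^2 - d^2 y_3^2$ for some scalar (or function) $d$. The plan is to track the $\sigma$-grading through the resolution in item (2), then exploit the structure of the canonical ring recorded in Section \ref{CD}.

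**Making the resolution $\sigma$-equivariant.** The involution $\sigma$ acts on $\mathcal R$ and on $\mathcal A$ (trivially on $x^2, y_i$ since these are the $\sigma$-invariant generators, but $x \in \mathcal R_1^-$ is anti-invariant), so it acts on the whole free resolution $0 \to F_1 \xrightarrow{\alpha} F_0 \to \mathcal R \to 0$. First I would fix the $\sigma$-eigenvalue of each free generator of $F_0$ and $F_1$. The generator of $\mathcal A$ in $F_0$ maps to $1 \in \mathcal R_0^+$, so it is invariant; the four $\mathcal A(-3)$ summands map to the degree-3 generators, and by the generator table $z_3, z_4 \in \mathcal R_3^+$ while $z_1, z_2 \in \mathcal R_3^-$ — but since $\deg x = 1$ is odd, an $\mathcal A(-3)$ generator mapping onto a $+$ element of $\mathcal R_3$ must itself be a $\sigma$-anti-invariant generator of the free module (and vice versa). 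Similarly $t \in \mathcal R_4^-$ forces the $\mathcal A(-4)$ summand of $F_0$ to be invariant. Doing the same bookkeeping for $F_1$ — whose summands correspond, via the Gorenstein symmetry $F_1 \cong F_0^\vee(-8)$ used to make $\alpha$ symmetric, to the dual generators — pins down the eigenvalue of every row and column index. The $(i,j)$ entry of $\alpha$ is then forced to lie in $\mathcal A^{\epsilon_i \epsilon_j \cdot (\deg_x)}$, i.e. its parity is $\epsilon_i \epsilon_j$ twisted by the number of $x$'s it contains. Reading off the resulting sign for each slot reproduces exactly the superscript pattern in the statement: the entries $q_1, q_2, a_{11}, a_{12}, a_{22}$ come out anti-invariant, $q_3, q_4, a_{13}, a_{14}, a_{23}, a_{24}, a_{33}, a_{34}, a_{44}$ invariant, $Q$ and $G$ invariant, and the two lone $x$'s sit correctly. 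The main care is to check that the symmetrization of $\alpha$ (choice of the isomorphism $F_1 \cong F_0^\vee(-8)$) can be made $\sigma$-equivariant; this is automatic because the pairing realizing Gorenstein duality is canonical up to scalar, hence $\sigma$-stable after rescaling.

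**Normalizing $Q$.** For the shape of $Q$, recall that $Q^+ = Q^+(y_1,y_2,y_3) + \lambda x z_1$ was identified just before item (1) as (after a change of variables) the unique relation in $\mathcal R_4^+$ among $\{x^4, x^2y_2, y_1^2, y_1y_3, y_2^2, y_3^2, xz_1, xz_2\}$, and the entry labelled $Q$ in item (3) is the quadratic part $Q^+(y_1,y_2,y_3)$, a quadratic form in $y_1, y_2, y_3$ (the coefficient of $y_0 = x^2$ being absorbable). A quadratic form in three variables over $\mathbb C$ is projectively either nondegenerate, rank $2$, or rank $1$. The generic (and, as one must argue, the only relevant) case is rank $3$: then a linear change of the $y_i$ — which we are free to make, as in the cited \cite[Lemma 4.5]{CD} normalization — brings $Q$ to a diagonal form $y_1^2 + c_2 y_2^2 + c_3 y_3^2$ with $c_2, c_3 \neq 0$. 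Rescaling $y_2, y_3$ further normalizes the nonzero coefficients; writing the result as $y_1^2 - y_2^2 - d^2 y_3^2$ just fixes the remaining scaling freedom and the sign conventions, the parameter $d$ being whatever scalar survives (one does not gain more because rescaling $y_2$ and $y_3$ independently is already used up, unless the $q_i, a_{ij}$ impose a relation — this is where $d$ enters as a genuine modulus rather than being set to $1$). I would also note that the linear changes used here respect the $\sigma$-grading, since $y_1, y_2, y_3$ are all in $\mathcal R_2^-$ (resp. the table assigns $y_1, y_3 \in \mathcal R_2^-$, $y_2 \in \mathcal R_2^+$ — so in fact one must only mix $y_1$ with $y_3$ freely and treat $y_2$ separately), so the sign pattern established in the previous step is preserved.

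**Expected main obstacle.** The genuinely delicate point is not the eigenvalue bookkeeping — that is linear algebra once the equivariant structure is set up — but justifying that $Q$ is nondegenerate (rank $3$) on the whole moduli space, so that the normal form $y_1^2 - y_2^2 - d^2 y_3^2$ is available for \emph{all} $\mathbb Z/2$-Godeaux surfaces and not merely generically. Degenerate $Q$ would correspond to special loci; ruling these out requires going back to the geometry of the hyperelliptic canonical curve (item (1)) and the octic model (item (5)), checking that a rank drop in $Q$ forces the bicanonical image to be too singular to come from a surface with the right invariants, or produces a non-reduced/non-normal $X$. This is the step I would spend the most effort on, and it is presumably where the \cite{Rei90} idea of restricting to the canonical curve first, then extending to the surface, does the real work.
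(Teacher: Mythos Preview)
Your overall strategy for the sign pattern---make the resolution $\sigma$-equivariant and read off parities from the known eigenvalues of the generators $1,z_1,\dots,z_4,t$---is essentially what the paper does (phrased there as: the relations $\alpha\cdot(1,z_1,\dots,z_4,t)^T=0$ in degree $5$ split as two invariant and two anti-invariant, which forces the signs). However, your bookkeeping slipped: you claim $G$ and $a_{33},a_{34},a_{44}$ come out invariant, whereas the Proposition asserts they are anti-invariant. Check row $1$ (the degree-$8$ relation): the term $Qt$ has sign $(+)(-)=-$, so the whole row is anti-invariant, hence $x^2G^-$ with $G$ anti-invariant. Similarly rows $4$ and $5$ are anti-invariant relations (they contain $a_{13}z_1$, $a_{14}z_1$ with $a_{13},a_{14}$ invariant and $z_1$ anti-invariant), forcing $a_{33},a_{34},a_{44}$ to be anti-invariant.

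The more serious gap is in your treatment of $Q$. You set up the problem as showing $Q$ has rank $3$, and flag this as the main obstacle. But the normal form $y_1^2-y_2^2-d^2y_3^2$ is meant to cover \emph{both} rank $3$ ($d\neq 0$) and rank $2$ ($d=0$); the paper even remarks that one could take $d\in\{0,1\}$ but keeps $d$ to see the degeneration. So $d$ is not a ``genuine modulus'' arising from constraints imposed by the $q_i,a_{ij}$. What must actually be excluded is rank $1$ (the conic $Q=0$ a double line), and the paper's argument is short and geometric: the bicanonical map sends the hyperelliptic canonical curve $C$ onto the conic $Q=0$; if this were a double line then $C=2D+Z$ with $Z$ a union of $(-2)$-curves, and since $\sigma$ preserves $Z$ this descends to $C'=2D'+Z'$ on the Godeaux surface $T=S/\sigma$, contradicting $C'^2=K_T^2=1$. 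The Reid idea you invoke is not used here---it comes into play only in the subsequent Proposition, where one restricts $\alpha$ to $x=0$ and analyses the resulting matrix on the canonical curve.
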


\begin{proof}
The bicanonical map of $S$ sends its (hyperelliptic) canonical curve $C$ onto the plane conic $Q=0,$
which is contained in the octic surface $\det(\alpha)=0$ in $\mathbb P^3.$
Suppose that this conic is a double line. Then $C=2D+Z,$ where $Z$ is supported on a union of $(-2)$-curves.
Since an automorphism of $S$ preserves the canonical system and $p_g(S)=1$,
then $C$ is preserved by the Godeaux involution $\sigma$.
Then the divisors $D$ and $Z$ are also preserved, giving rise to curves $C'=2D'+Z'$
in the Godeaux surface $S/\sigma,$
with $Z'$ also supported on a union of $(-2)$-curves. This contradicts the fact $C'^2=1.$
Therefore $Q$ is either a smooth conic or the union of two distinct lines,
so there is a change of variables that allow us to write $Q=y_1^2-y_2^2-d^2y_3^2,$ for some constant $d$
(we note that we could consider only the two cases $d=1$ and $d=0,$
but we prefer to keep the modulus $d$ hoping to find a family of surfaces such that one can see
the smooth conic degenerating to the singular one).

Using Riemann-Roch and the local basis of $\mathcal R$, one sees that there are two $\sigma$-invariant relations of degree $5$ and two anti-invariant ones.
Since $x,z_1,z_3,t$ are anti-invariant, $z_2,z_4$ are invariant, and we are assuming $Q$ invariant,
the relations $$\alpha\cdot (1,z_1,z_3,z_2,z_4,t)^T=0$$ from Section \ref{CD} (6)
imply that the superscript signs must be as claimed.
\end{proof}

\begin{lemma}\label{LemmaDoubleCan}
The matrix $\alpha|_{x=0}$ cannot be of the type
\[
\left(\begin{array}{c|cccc|c}
0 & 0 & 0 & 0 & 0 & Q \\ \hline
0 & y_1+d y_3 & 0 & y_2 & 0 & 0 \\
0 & 0 & y_1+dy_3 & 0 & y_2 & 0 \\
0 & y_2 & 0 & y_1-dy_3 & 0 & 0 \\
0 & 0 & y_2 & 0 & y_1-dy_3 & 0\\ \hline
Q & 0 & 0 & 0 & 0 & 0 
\end{array}\right)
\]
with $Q=y_1^2-y_2^2-d^2y_3^2.$
\end{lemma}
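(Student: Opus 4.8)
The plan is to argue by contradiction: suppose $\alpha|_{x=0}$ is exactly the displayed matrix, and show this forces the canonical curve $C$ of $S$ to be a non-reduced divisor, which was already excluded (in the proof of Proposition \ref{signalpha}) by the fact that $C'^2=1$ on the Godeaux surface $S/\sigma$. The key is that $\alpha|_{x=0}$ governs the ring $\mathcal R/(x) = \bigoplus_n H^0(C, nK_C)$ (since $x$ is the unique degree-one generator, vanishing on $C$ means restricting to $C$), so the geometry encoded by this particular shape can be read off directly. First I would observe that the displayed matrix has a very degenerate structure: deleting the first and last rows and columns leaves the $4\times 4$ block
\[
M=\begin{pmatrix} y_1+dy_3 & 0 & y_2 & 0\\ 0 & y_1+dy_3 & 0 & y_2\\ y_2 & 0 & y_1-dy_3 & 0\\ 0 & y_2 & 0 & y_1-dy_3\end{pmatrix},
\]
whose determinant is $\big((y_1+dy_3)(y_1-dy_3)-y_2^2\big)^2 = (y_1^2 - d^2 y_3^2 - y_2^2)^2 = Q^2$. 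Expanding $\det(\alpha|_{x=0})$ along the last row (the row $(Q,0,0,0,0,0)$) and then along the resulting first column, one gets $\det(\alpha|_{x=0}) = -Q^2 \cdot \det M = -Q^4$ up to sign; in any case the octic $\det(\alpha)=0$ restricted to the hyperplane $x=0$ is a perfect square $Q^4$, equivalently $2D$ with $D = \{Q=0\}$ a conic counted with multiplicity. But by part (1) of the Catanese--Debarre list the bicanonical image of $C$ is the plane section $\{x=0\}\cap\{\det\alpha=0\}$, and a bicanonical curve that maps to a non-reduced plane curve is exactly the situation "$Q$ is a double line / the conic is non-reduced on the octic" ruled out in Proposition \ref{signalpha}.

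More carefully — since the earlier exclusion was phrased for the conic $Q$ itself being a double \emph{line}, not for $Q$ appearing with multiplicity in the octic — I would instead run the same $(-2)$-curve argument one level up. From the shape of $M$, the module $\mathcal R/(x)$ presented by $\alpha|_{x=0}$ fails to be the canonical ring of an \emph{integral} hyperelliptic curve of the expected genus; concretely, the rank condition (RC) together with this $M$ forces relations among $z_1,\dots,z_4,t$ on $C$ that make the bicanonical morphism of $C$ factor through a curve of strictly smaller arithmetic genus, so $C$ is reducible or non-reduced. Decomposing $C$ as $2D+Z$ with $Z$ a sum of $(-2)$-curves and pushing down by $\sigma$ gives $C'=2D'+Z'$ on the Godeaux surface with $C'^2=1$ and $Z'$ a sum of $(-2)$-curves, which is impossible since $1 = C'^2 = 4D'^2 + 4D'\cdot Z' + Z'^2$ is odd while the right-hand side, after using adjunction $K_{S/\sigma}\cdot C' = 1$ and $K\cdot Z'=0$, cannot be reconciled — exactly the contradiction already obtained in Proposition \ref{signalpha}.

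The main obstacle, and the step I would spend the most care on, is the middle implication: going from "the $4\times 4$ block is this specific $M$" to "the canonical curve $C$ is non-reduced (or reducible)". The determinant computation $\det M = Q^2$ is immediate, but one must be sure that this degeneracy is not an artefact of a bad choice of generators that could be fixed by a further coordinate change — i.e. that no admissible change of variables preserving the normal form of Proposition \ref{signalpha} turns this $\alpha|_{x=0}$ into a matrix presenting an integral curve. I expect this to come down to checking that the variety $\Proj(\mathcal R/(x))$ cut out by the equations in part (6) with $x=0$ substituted has an embedded or multiple component supported on $\{Q=0\}$; equivalently that the ideal generated by the $v_iv_j - \sum l^k_{ij}v_k$ and $\sum \alpha_{ij}v_j$ at $x=0$ is not the prime ideal of a smooth hyperelliptic curve. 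Once that is established the transfer to $S/\sigma$ and the intersection-number contradiction are routine and identical to the argument already given for Proposition \ref{signalpha}.
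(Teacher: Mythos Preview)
Your overall strategy---show that this particular $\alpha|_{x=0}$ forces the canonical curve $C$ to be non-reduced, then invoke the $(-2)$-curve contradiction from Proposition~\ref{signalpha}---is exactly the paper's strategy. But your execution has a genuine gap at the step you yourself flag as ``the main obstacle''.

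The determinant computation $\det M=Q^2$, hence $\det(\alpha|_{x=0})=\pm Q^4$, is correct but proves nothing: it holds for \emph{every} admissible $\alpha$, not just the bad one. Indeed item (3) of Section~\ref{CD} says $\det(a_{ij})\in(x^2,Q^2)$, so at $x=0$ the central $4\times4$ determinant is always a constant times $Q^2$, and the plane section of the octic is always $Q^4$ up to scalar. Geometrically this just reflects that the hyperelliptic canonical curve maps $2:1$ onto the conic $Q=0$; it carries no information about whether $C$ itself is reduced. So your first paragraph does not distinguish this matrix from the one in Proposition~\ref{PropM}, and your second and third paragraphs only restate what remains to be shown without showing it.

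The paper closes the gap by actually computing the ring structure on $\mathcal R/(x)$. Since the cofactors $\beta_{1k}|_{x=0}$ vanish for $k\le 5$, the rank condition (RC) collapses at $x=0$ to $\beta_{ij}=l_{ij}^6\,\beta_{16}$, which determines $(l_{ij}^6)|_{x=0}$ explicitly. Feeding this into the multiplication table of Section~\ref{CD}~(6), one reads off that the relation for $v_6^2=t^2$ degenerates to $t^2=0$ on $C$. That is the concrete algebraic fact which forces $C$ to be a double curve; the rest of the argument is then exactly as you say. What you are missing is this use of (RC) to pin down the $l_{ij}^k$ and hence the actual equations of $C$, rather than only the equation of its image in $\mathbb P^3$.
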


\begin{proof}
Since $\beta_{1k}|_{x=0}=0$ for $k=1,\ldots,5,$ in this case the rank condition (RC) is
$$\left(\beta_{ij}=l_{ij}^6\beta_{16}\right)|_{x=0}.$$
We compute (see the arXiv ancillary file Lemma2.txt) the matrix
\[
\left(l_{ij}^6\right)|_{x=0}=
\left(\begin{array}{cccccc}
0 & 0 & 0 & 0 & 0 & 1 \\
0 & y_1-d y_3 & 0 & -y_2 & 0 & 0 \\
0 & 0 & y_1-dy_3 & 0 & -y_2 & 0 \\
0 & -y_2 & 0 & y_1+dy_3 & 0 & 0 \\
0 & 0 & -y_2 & 0 & y_1+dy_3 & 0\\
1 & 0 & 0 & 0 & 0 & 0 
\end{array}\right).
\]
Now it follows from Section \ref{CD} (6) that the equations of the effective canonical divisor of the corresponding surface $S$
contain the polynomial $t^2=0,$ thus giving a double curve.
As in the proof of Proposition \ref{signalpha}, this is a contradiction.

\end{proof}

Using this result (see Case 3) below), we show the following.

\begin{proposition}\label{PropM}
The matrix $\alpha|_{x=0}$ can be considered as
\begin{equation}\label{matrixMM}
\left(\begin{array}{c|cccc|c}
0 & 0 & 0 & 0 & 0 & Q \\ \hline
0 & d^2y_3 & y_1 & y_2 & 0 & 0 \\
0 & y_1 & y_3 & 0 & y_2 & 0 \\
0 & y_2 & 0 & -y_3 & y_1 & 0 \\
0 & 0 & y_2 & y_1 & -d^2y_3 & 0 \\ \hline
Q & 0 & 0 & 0 & 0 & 0 
\end{array}\right),
\end{equation}
with $Q=y_1^2-y_2^2-d^2y_3^2.$
\end{proposition}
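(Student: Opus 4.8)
The plan is to determine $\alpha|_{x=0}$ by combining the structural constraints already established with the symmetry information from Proposition~\ref{signalpha}, and then to rule out the degenerate alternatives using Lemma~\ref{LemmaDoubleCan}. From Proposition~\ref{signalpha} the matrix $\alpha|_{x=0}$ has the block shape
\[
\left(\begin{array}{c|cccc|c}
0 & 0 & 0 & 0 & 0 & Q \\ \hline
0 & a_{11}^- & a_{12}^- & a_{13}^+ & a_{14}^+ & 0 \\
0 & a_{12}^- & a_{22}^- & a_{23}^+ & a_{24}^+ & 0 \\
0 & a_{13}^+ & a_{23}^+ & a_{33}^- & a_{34}^- & 0 \\
0 & a_{14}^+ & a_{24}^+ & a_{34}^- & a_{44}^- & 0 \\ \hline
Q & 0 & 0 & 0 & 0 & 0
\end{array}\right),
\]
where the $a_{ij}$ are now linear forms in $y_1,y_2,y_3$ (the variable $y_0=x^2$ drops out at $x=0$), and the sign superscripts record whether each entry lies in the $\sigma$-eigenspace spanned by $\{y_1,y_3\}$ (anti-invariant, i.e. $a^-\in\langle y_1,y_3\rangle$) or by $\{y_2\}$ (invariant, i.e. $a^+\in\langle y_2\rangle$). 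So the central $4\times 4$ block $B=(a_{ij})|_{x=0}$ is a symmetric matrix whose $(1,1),(1,2),(2,2),(3,3),(3,4),(4,4)$ entries are combinations of $y_1,y_3$ and whose $(1,3),(1,4),(2,3),(2,4)$ entries are multiples of $y_2$.

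Next I would bring in the rank condition (RC) and the minor conditions from Section~\ref{CD}(3). At $x=0$ the cofactors $\beta_{1k}$ for $k=1,\dots,5$ vanish (each such cofactor is a minor of a matrix with a zero first column after deleting one of the inner rows, so it is divisible by $x$ or by the zero entries), so (RC) forces every cofactor $\beta_{ij}|_{x=0}$ to be a multiple of $\beta_{16}|_{x=0}$; together with the conditions that the $3\times 3$ minors of $B$ lie in $(Q)$ and $\det B\in(Q^2)$ modulo $x$, this pins down $B$ up to the action of $\mathrm{GL}$-changes of the basis $(v_2,v_3,v_4,v_5)$ compatible with the $\sigma$-grading and with the normalization $Q=y_1^2-y_2^2-d^2y_3^2$. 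Running through the finitely many normal forms of such a $B$ — organized as "cases" as the paper indicates (``Case 3)'') — one finds that all but one are excluded: a parallel of the argument in Lemma~\ref{LemmaDoubleCan} shows that in each bad case the canonical equations produced by Section~\ref{CD}(6) contain a perfect square $t^2$ or $(\text{linear})\cdot t$, forcing the canonical curve $C$ to be non-reduced and contradicting $C'^2=1$ on the Godeaux quotient exactly as before. The surviving normal form is \eqref{matrixMM}, and I would verify directly (this is the ancillary-file computation) that it satisfies all the conditions of Section~\ref{CD}(3)–(4).

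I expect the main obstacle to be the case analysis in the middle step: classifying the symmetric $4\times4$ matrices $B$ with the prescribed eigenspace pattern and satisfying the minor/rank conditions, modulo the allowed coordinate changes, is combinatorially delicate because the group of admissible changes of basis is constrained by both the $\sigma$-grading and the requirement that $Q$ keep its normalized shape. The cleanest route is probably to first exploit the $2\times 2$ block structure coming from the $+/-$ splitting: write $B=\begin{pmatrix} P & y_2 N\\ y_2 N^{T} & R\end{pmatrix}$ with $P,R$ symmetric $2\times2$ matrices linear in $y_1,y_3$ and $N$ a constant $2\times2$ matrix, then use $\det B\in (Q^2)$ to see that $N$ must be invertible (otherwise a lower-rank degeneration occurs, landing in the Lemma~\ref{LemmaDoubleCan} situation or a variant), normalize $N=\mathrm{Id}$ by a basis change, and finally solve the resulting polynomial identities for $P$ and $R$. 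That last solving step is where one checks that, after excluding the double-conic and double-line degenerations, the answer is forced to be \eqref{matrixMM}.
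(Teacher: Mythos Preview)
Your overall architecture matches the paper's: write $B=\begin{pmatrix}P & y_2N\\ y_2N^{T} & R\end{pmatrix}$ with $P,R$ linear in $y_1,y_3$ and $N$ constant, show $N$ invertible, normalize it, and then solve for $P,R$ using the condition that the $3\times3$ minors of $B$ lie in $(Q)$. But there is a real gap in how you describe the endgame.

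You write that after normalization ``all but one [normal form] are excluded'' by Lemma~\ref{LemmaDoubleCan}-type arguments. That is not what happens. After normalizing $N$ (the paper in fact first normalizes the off-diagonal entry $m_2$ of $P$ to $0$, $y_1$, or $y_3$ by row/column operations on the first block, \emph{then} sets $N=I$ via operations on the second block), solving the cofactor conditions does \emph{not} give a unique solution: one obtains three families (the paper's Cases 1, 2, 3), and Case~3 itself splits into two subfamilies $M_1,M_2$. Of these, only $M_1$ is ruled out by Lemma~\ref{LemmaDoubleCan}. The remaining solutions (Case~1, Case~2, and $M_2$) are all legitimate and are then shown to be \emph{equivalent} to \eqref{matrixMM} by exhibiting explicit conjugations $P\cdot(\alpha|_{x=0})\cdot P^{T}$ together with changes of the variables $y_1,y_2,y_3$. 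These variable changes are essential and are not just row/column moves on $B$; without them Cases~2 and~3 would look like distinct normal forms. So the heart of the proof is not ``exclude the bad cases'' but ``identify the several good cases with one another''.

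A smaller point: your justification for $\det N\neq 0$ (``otherwise a lower-rank degeneration lands in the Lemma~\ref{LemmaDoubleCan} situation'') is vaguer than necessary. The paper's argument is direct: the coefficient of $y_2^4$ in $\det B$ equals $(\det N)^2$, and since $\det B$ must be a \emph{nonzero} multiple of $Q^2$ (else the octic $\det\alpha$ would be divisible by $y_0$, contradicting irreducibility), comparing $y_2^4$-coefficients gives $\det N\neq 0$.
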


\begin{proof}
From Proposition \ref{signalpha}, there exist constants $r_i$ such that the matrix $\alpha|_{x=0}$ can be written as
\begin{equation*}
M:=
\begin{pmatrix}
0 & 0 & 0 & 0 & 0 & Q \\
0 & m_1 & m_2 & r_1y_2 & r_2y_2 & 0 \\
0 & m_2 & m_3 & r_3y_2 & r_4y_2 & 0 \\
0 & r_1y_2 & r_3y_2 & m_4 & m_5 & 0 \\
0 & r_2y_2 & r_4y_2 & m_5 & m_6 & 0 \\
Q & 0 & 0 & 0 & 0 & 0 \\
\end{pmatrix},
\end{equation*}
with $m_i=a_iy_1+b_iy_3.$
Denote its rows, columns by $L_i,$ $C_i,$ respectively.

If $m_2\ne 0,$ then we can assume $m_3\ne 0,$ by possibly doing the elementary operations
$L_3\rightarrow L_3+\gamma L_2,$ $C_3\rightarrow C_3+\gamma C_2,$ for some constant $\gamma.$
Now operations of the type $L_2\rightarrow L_2+\beta L_3,$ $C_2\rightarrow C_2+\beta C_3$
take us to one of the cases $m_2=0,$ $m_2\propto y_1$ or $m_2\propto y_3,$ where here the notation $a\propto b$
means that $a=\tau b$ for some constant $\tau\ne 0.$

Suppose that $m_2\propto y_1.$ If $m_3\propto y_1,$ we can go to the case $m_2=0$
(by doing elementary operations over rows and columns).
If not, we can take $m_3\propto y_3.$ Analogously if $m_2\propto y_3,$ we can obtain the case $m_2=0$ or $m_3\propto y_1.$
Now by multiplying $L_3,C_3$ and $L_2,C_2$ by constants, we can assume that
$m_2=0,$ or $m_2=y_1$ and $m_3=y_3,$ or $m_2=y_3$ and $m_3=y_1.$

Let $N$ be the $4\times 4$ central matrix of $M$ and $D:=\det(N).$
One can check that the coefficient of $y_2^4$ in $D$ is $(r_1r_4-r_2r_3)^2.$
Since $D$ is a multiple of $Q^2$ (from Section \ref{CD} (3)), we must have $r_1r_4-r_2r_3\ne 0$.
Then elementary operations over the rows $L_4,L_5$ and the columns $C_4,C_5$
allow us to assume that $r_1=r_4=1$ and $r_2=r_3=0.$

Summing up, we have three possible cases:
\begin{itemize}
\item[{\bf 1)}] $m_2=y_1,$ $m_3=y_3,$ $r_1=r_4=1,$ $r_2=r_3=0;$
\item[{\bf 2)}] $m_2=y_3,$ $m_3=y_1,$ $r_1=r_4=1,$ $r_2=r_3=0;$
\item[{\bf 3)}] $m_2=0,$\ \ \ \ \ \ \ \ \ \ \ \ \ \ \ $r_1=r_4=1,$ $r_2=r_3=0.$
\end{itemize}

Notice that the rank condition (RC) implies that each cofactor $C_{ij}$ of $N$ is divisible by the quadric $Q.$
We show below that this is enough to conclude the proof.

The arXiv ancillary file Proposition3.txt contains the computation of the cofactors that appear in the following three cases.\\

\noindent {\bf Case 1)}\\
We have
$$-C_{1,3}/y_2 = a_5y_1^2+(b_5+a_6)y_1y_3-y_2^2+b_6y_3^2,$$                
$$C_{1,4}/y_2 = a_4y_1^2+(b_4+a_5)y_1y_3+b_5y_3^2,$$
$$C_{2,3}/y_2 = (a_1a_5+a_6)y_1^2+(a_1b_5+b_1a_5+b_6)y_1y_3+b_1b_5y_3^2.$$
By comparing coefficients with $Q,$ the only possibility is that the first one is equal to $Q,$ and the other two are zero.
This implies
$$a_1=0, b_1=d^2, a_4=0, b_4=-1, a_5=1, b_5=0, a_6=0, b_6=-d^2.$$

\noindent {\bf Case 2)}\\
We have
$$-C_{1,3}/y_2 = a_6y_1^2+(a_5+b_6)y_1y_3-y_2^2+b_5y_3^2,$$                
$$C_{1,4}/y_2 = a_5y_1^2 + (a_4 + b_5)y_1y_3 + b_4y_3^2,$$
$$-C_{2,3}/y_2 = a_1a_4y_1^2 + (a_1b_4 + b_1a_4 + a_5)y_1y_3 - y_2^2 + (b_1b_4 + b_5)y_3^2.$$
The only possibility is that the second one is zero, and the other two are equal to $Q.$
This implies that $d\ne 0$ and $$a_1=d^{-2}, b_1=0, a_4=d^2, b_4=0, a_5=0, b_5=-d^2, a_6=1, b_6=0.$$

Now let
$$P:={\rm Diag}\left(1,r^3,-\frac{1}{r},\frac{1}{r^3},-r,1\right)$$
with $r^4+d^2=0.$
The change of variables $$(y_1,y_2,y_3)\mapsto \left(-r^2y_3,y_2,\frac{1}{r^2}y_1\right)$$
sends the matrix (\ref{matrixMM}) above to the product $PMP^T.$\\

\noindent {\bf Case 3)}\\
In this case we have
$$-C_{12}/y_2=y_2(a_5y_1 + b_5y_3),$$
$$-C_{13}/y_2=\left(a_3a_6y_1^2 + (b_3a_6 + a_3b_6)y_1y_3 - y_2^2 + b_3b_6y_3^2\right),$$
$$-C_{24}/y_2=\left(a_1a_4y_1^2 + (b_1a_4 + a_1b_4)y_1y_3 - y_2^2 + b_1b_4y_3^2\right).$$
This implies
$a_5=b_5=b_3a_6 + a_3b_6=b_1a_4 + a_1b_4=0,$ $a_3a_6=a_1a_4=1$ and $b_3b_6=b_1b_4=-d^2.$
Then $a_1a_3\ne 0$ and we can assume $a_1=a_3=1.$
This way we obtain 4 matrices which, by changing $y_3$ to $-y_3$, reduce to
\[
M_j:=
\begin{pmatrix}
0 & 0 & 0 & 0 & 0 & Q \\
0 & y_1+d y_3 & 0 & y_2 & 0 & 0 \\
0 & 0 & y_1-(-1)^jdy_3 & 0 & y_2 & 0 \\
0 & y_2 & 0 & y_1-dy_3 & 0 & 0 \\
0 & 0 & y_2 & 0 & y_1+(-1)^jdy_3 & 0 \\
Q & 0 & 0 & 0 & 0 & 0 \\
\end{pmatrix}
\]
with $j=1,2.$

From Lemma \ref{LemmaDoubleCan}, only the matrix $M_2$ with $d\ne 0$ can correspond to a Godeaux surface.
Let
\[
R:=
\begin{pmatrix}
1 & 0 & 0 & 0 & 0 & 0 \\
0 & i & d/2 & 0 & 0 & 0 \\
0 & -i/d & 1/2 & 0 & 0 & 0 \\
0 & 0 & 0 & -i/2 & 1/d & 0 \\
0 & 0 & 0 & id/2 & 1 & 0 \\
0 & 0 & 0 & 0 & 0 & 1
\end{pmatrix},
\]
with $i^2=-1.$
The product $RM_2R^T$ shows that $M_2$ is equivalent to a matrix of the type (\ref{matrixMM}) above.

\end{proof}

\begin{theorem}\label{ThmMatrix}
There exist $c,d\in\mathbb C$ such that the matrix $\alpha$ can be written as
\vspace{.1cm}

\[
\alpha_1=
\left(\begin{array}{c|cccc|c}
x^2G^- & xq_1^- & xq_2^- & xq_3^+ & xq_4^+ & Q \\ \hline
xq_1^- & d^2y_3 & y_1 & y_2 & 0 & x \\
xq_2^- & y_1 & y_3 & cx^2 & y_2 & 0 \\
xq_3^+ & y_2 & cx^2 & -y_3 & y_1 & 0 \\
xq_4^+ & 0 & y_2 & y_1 & -d^2y_3 & 0 \\ \hline
Q & x & 0 & 0 & 0 & 0 
\end{array}\right)
\]
$$Q=y_1^2-y_2^2-d^2y_3^2,$$

or

\[
\alpha_2=
\left(\begin{array}{c|cccc|c}
x^2G^- & xq_1^- & xq_2^- & xq_3^+ & xq_4^+ & Q \\ \hline
xq_1^- & y_3 & ey_1 & y_2 & 0 & x \\
xq_2^- & ey_1 & y_1 & cx^2 & y_2 & 0 \\
xq_3^+ & y_2 & cx^2 & -y_1 & ey_1 & 0 \\
xq_4^+ & 0 & y_2 & ey_1 & -y_3 & 0 \\ \hline
Q & x & 0 & 0 & 0 & 0 
\end{array}\right)
\]
$$d\ne 0,\ e:=-\frac{1}{2d},\ Q=e^2y_1^2-y_2^2-y_1y_3,$$

or

\[
\alpha_3=
\left(\begin{array}{c|cccc|c}
x^2G^- & xq_1^- & xq_2^- & xq_3^+ & xq_4^+ & Q \\ \hline
xq_1^- & y_3 & y_1 & y_2 & 0 & x \\
xq_2^- & y_1 & 0 & cx^2 & y_2 & 0 \\
xq_3^+ & y_2 & cx^2 & 0 & y_1 & 0 \\
xq_4^+ & 0 & y_2 & y_1 & -y_3 & 0 \\ \hline
Q & x & 0 & 0 & 0 & 0 
\end{array}\right)
\]
$$d=0,\ Q=y_1^2-y_2^2,$$
\vspace{.1cm}

\noindent with
$G,$ $q_i$ polynomials of degree $3,2$ in $\left(y_0=x^2,y_1,y_2,y_3\right),$ respectively.\newline
(As above the superscript signs mean $\sigma$-invariant or $\sigma$-anti-invariant.)

Moreover, we can assume $c=1$ or $c=0.$
\end{theorem}
\begin{remark}
The cases $\alpha_2$ and $\alpha_3$ could be combined into a single case, but due to the difficulty of the
computations in Section~\ref{thealgorithm} we need to treat them separately.
\end{remark}
\begin{proof}
We want to extend the matrix (\ref{matrixMM}) from Proposition \ref{PropM} by adding polynomials divisible by $x.$
This must respect the signs given in Proposition \ref{signalpha},
hence concerning the entries of order $2,$ we can only add multiples of $x^2$ to the $\sigma$-invariant ones.
We get the matrix
\[
\alpha=
\begin{pmatrix}
x^2G^- & xq_1^- & xq_2^- & xq_3^+ & xq_4^+ & Q \\
xq_1^- & d^2y_3 & y_1 & y_2+c_1x^2 & c_2x^2 & c_5x \\
xq_2^- & y_1 & y_3 & c_3x^2 & y_2+c_4x^2 & c_6x \\
xq_3^+ & y_2+c_1x^2 & c_3x^2 & -y_3 & y_1 & 0 \\
xq_4^+ & c_2x^2 & y_2+c_4x^2 & y_1 & -d^2y_3 & 0 \\
Q & c_5x & c_6x & 0 & 0 & 0 
\end{pmatrix}.
\]

We know that $\det(\alpha)$  defines an irreducible surface in $\mathbb P^3,$ thus $c_5=c_6=0$ is impossible.

We consider 4 cases. (The computational details are available in the arXiv ancillary file Theorem4.txt.)\\

\noindent {\bf Case 0: $c_6=0$}\\
Since $c_5\ne 0$ in this case, we can take $c_5=1$ from the change of variable $x\rightarrow x/{c_5}.$
Then elementary operations using the last row and column give us $c_1=c_4$ and $c_2=0.$
We can assume $c_4=0$ by doing $y_2\rightarrow y_2-c_4x^2$, followed by an elementary operation on the first row/column
to preserve $Q$, obtaining a matrix of the type $\alpha_1$.\\

\noindent {\bf Case 1: $c_6\ne 0$, $d^2\ne (c_5/c_6)^2,$ $c_5\ne 0$}\\
Let $$a:=\frac{c_5^2+d^2c_6^2}{c_5^2-d^2c_6^2},\ \ \ \ \ \ b:=\frac{2c_5c_6}{c_5^2-d^2c_6^2}$$
and
\[
P:=
\begin{pmatrix}
1 & 0 & 0 & 0 & 0 & 0 \\
0 & r & -d^2rc_6/c_5 & 0 & 0 & 0 \\
0 & -rc_6/c_5 & r & 0 & 0 & 0 \\
0 & 0 & 0 & r & rc_6/c_5 & 0 \\
0 & 0 & 0 & d^2rc_6/c_5 & r & 0 \\
0 & 0 & 0 & 0 & 0 & 1 
\end{pmatrix},
\]
with $$r^2=\dfrac{c_5^2}{c_5^2-d^2c_6^2}.$$
The product $P\alpha P^T$ is a matrix of the type
\[
\begin{pmatrix}
x^2G' & xq_1' & xq_2' & xq_3' & xq_4' & Q \\
xq_1' & d^2Y_3 & Y_1 & y_2+c_1'x^2 & c_2'x^2 & c_5'x \\
xq_2' & Y_1 & Y_3 & c_3'x^2 & y_2+c_4'x^2 & 0 \\
xq_3' & y_2+c_1'x^2 & c_3'x^2 & -Y_3 & Y_1 & 0 \\
xq_4' & c_2'x^2 & y_2+c_4'x^2 & Y_1 & -d^2Y_3 & 0 \\
Q & c_5'x & 0 & 0 & 0 & 0 
\end{pmatrix},
\]
with
\[
\begin{pmatrix}
    Y_1      \\
    Y_3      
\end{pmatrix}
=
\begin{pmatrix}
    a  &  -d^2b      \\
    -b  &  a      
\end{pmatrix}
\begin{pmatrix}
    y_1      \\
    y_3      
\end{pmatrix} .
\]
Notice that the determinant of this $2\times 2$ matrix is $a^2-d^2b^2=1.$

Since $c_5'=c_5/r\ne 0,$ we can proceed as before to get $c_5'=1,c_1'=c_2'=c_4'=0.$
Finally from $$Y_1^2-y_2^2-d^2Y_3^2 = y_1^2-y_2^2-d^2y_3^2$$ we see that the matrix $P\alpha P^T$
is in the form of the matrix $\alpha_1$ above.
\\

\noindent {\bf Case 2: $c_6\ne 0$, $d^2\ne (c_5/c_6)^2,$ $c_5=0$}\\
Let
\[
P:=
\begin{pmatrix}
1 & 0 & 0 & 0 & 0 & 0 \\
0 & 0 & d & 0 & 0 & 0 \\
0 & 1/d & 0 & 0 & 0 & 0 \\
0 & 0 & 0 & 0 & 1/d & 0 \\
0 & 0 & 0 & d & 0 & 0 \\
0 & 0 & 0 & 0 & 0 & 1 
\end{pmatrix}.
\]
The product $P\alpha P^T$ gives us a matrix of the type $\alpha$ with $\alpha_{3,6}=\alpha_{6,3}=0.$
We proceed as in Case 0 to get $\alpha_1.$
\\

\noindent {\bf Case 3: $c_6\ne 0$, $d^2=(c_5/c_6)^2$}\\
Since $c_6\ne 0,$ we can take $c_6=1,$ $c_5=\pm d$.
We now proceed as in Case 0: elementary operations using the last row and column give $c_3=c_4=0,$
then a change of variable involving $y_2$ and $x$ gives $c_1=0$
(followed by an elementary operation on the first row/column to preserve $Q$).

By looking to $P\alpha P^T$ with $P:={\rm Diag}(1,-1,1,-1,1,1),$ we see that we can consider $c_5=d.$

Let
\[
P':=
\begin{pmatrix}
1 & 0 & 0 & 0 & 0 & 0 \\
0 & 0 & 1 & 0 & 0 & 0 \\
0 & 1 & -d & 0 & 0 & 0 \\
0 & 0 & 0 & d & 1 & 0 \\
0 & 0 & 0 & 1 & 0 & 0 \\
0 & 0 & 0 & 0 & 0 & 1 
\end{pmatrix}.
\]
If $d=0$, the product $P'\alpha P'^T$ is a matrix of the type $\alpha_3$.
If $d\ne 0$, the product $P'\alpha P'^T$, followed by the change $y_1\rightarrow ey_1+dy_3$, is a matrix of the type $\alpha_2$.\\

\noindent{\bf Finally, the assertion about $c$}:
Assuming $c\ne 0,$ take $s$ such that $c=s^4.$ We can assume $c=1$ by taking the product
$P\alpha P^T$ with $$P:={\rm Diag}(1,s,1/s,1/s,s,1)$$
followed by the changes $$x\mapsto x/s,\, y_3\mapsto s^2y_3,\, d\mapsto d/(s^2)\ \ \ \ {\rm for\ the\ matrix\ } \alpha_1,$$
$$x\mapsto x/s,\, y_1\mapsto s^2y_1,\, y_3\mapsto y_3/s^2,\, e\mapsto e/s^2\ \ \ \ {\rm for\ the\ matrix\ } \alpha_2,$$
and
$$x\mapsto x/s,\, y_3\mapsto y_3/s^2\ \ \ \ {\rm for\ the\ matrix\ } \alpha_3.$$

\end{proof}

\section{The equations}\label{algo}

Recall from Theorem \ref{ThmMatrix} that $G$ is a degree 3 anti-invariant polynomial on the variables $(x^2,y_1,y_2,y_3),$
$q_1,q_2$ are anti-invariant of degree 2, and $q_3,q_4$ are invariant of degree 2.
We write
\begin{equation*}
\begin{split}
G=g_1x^4y_1+g_2x^4y_3+g_3x^2y_1y_2+g_4x^2y_2y_3+g_5y_1^3+\\
g_6y_1^2y_3+g_7y_1y_2^2+g_8y_1y_3^2+g_9y_2^2y_3+g_{10}y_3^3,
\end{split}
\end{equation*}
\begin{align*}
q_1&=b_1x^2y_1+b_2x^2y_3+b_3y_1y_2+b_4y_2y_3,\\
q_2&=b_5x^2y_1+b_6x^2y_3+b_7y_1y_2+b_8y_2y_3,\\
q_3&=b_{9}x^4+b_{10}x^2y_2+b_{11}y_1^2+b_{12}y_1y_3+b_{13}y_2^2+b_{14}y_3^2,\\
q_4&=b_{15}x^4+b_{16}x^2y_2+b_{17}y_1^2+b_{18}y_1y_3+b_{19}y_2^2+b_{20}y_3^2.
\end{align*}

Our goal is to compute the set of parameters $g_1,\ldots,g_{10}$ and $b_1,\ldots,b_{20}$ such that the matrices $\alpha_i$
from Theorem \ref{ThmMatrix} satisfy the rank condition (RC).
We note that the equalities $\beta_{ij}=\sum_{k=1}^6 l_{ij}^k\beta_{1k}$ hold in the polynomial ring $\mathcal A,$
so comparison of coefficients is valid here.

By doing elementary operations over the rows and columns of the matrix $\alpha_i,$ we can assume that $8$ of the $b_i$ are zero.
For instance in the case of $\alpha_1:$
\begin{itemize}
\item We remove the monomials $x^2y_2, y_1^2, y_2^2$ from $q_4$\newline
     (using the lines/columns $l_3,l_4/c_3,c_4$);
\item We remove the monomials $y_1^2,y_1y_3,y_3^2$ from $q_3$\newline
     (using the lines/columns $l_4,l_5/c_4,c_5$);
\item We remove the monomials $x^2y_1, x^2y_3$ from $q_1$\newline
     (using the line/column $l_6/c_6$).
\end{itemize}

The idea for the computations is the following:
we write the polynomials $l_{ij}^k$ as a sum with coefficients $r_m$
and monomials in $\mathcal A=\mathbb C[x,y_1,y_2,y_3]$
of the right degree and eigenspace, i.e. according to the (RC)
$$\beta_{ij}-\sum_{k=1}^6 l_{ij}^k\beta_{1k}=0.$$
Then we need to determine values for the parameters $d,g_p,b_n,r_m$ such that
the coefficients of the polynomials from (RC) vanish identically.
The aim is to eliminate all parameters $r_m$ first, then to eliminate some of the remaining parameters
to obtain a space of dimension 8.
After this the equations of the surfaces $X,$ which depend on these remaining parameters, follow from Section \ref{CD} (6).

The polynomials $l_{ij}^k$ depend on 371 parameters,
and the cofactors $\beta_{i,j}$ depend on $d,g_1,\ldots,g_{10}$ and twelve of $b_1,\ldots,b_{20}.$
We have a system of 876 coefficients depending on $371+23=394$ parameters.
All parameters $r_m$ appear with degree 1, some of them isolated.
A basic linear elimination applied to each of the cases of Theorem \ref{ThmMatrix} is enough to finish the computation.

\subsection{The computation}\label{thealgorithm}

Given a scheme $S$, the Magma function {\bf LinearElimination$(S,v)$} iteratively eliminate variables that appear strictly linearly on the defining equations of $S$, following the variable order given by $v$. We use this function below.\\


\begin{itemize}

\item[(1)] We work on \(R[x,y_1,y_2,y_3,z_1,z_2,z_3,z_4,t]\)
with \(R\) a polynomial ring of rank \(394\) (the parameters).
Recall that the involution \(\sigma\) is
$$(x,y_1,y_2,y_3,z_1,z_2,z_3,z_4,t)\mapsto (-x,-y_1,y_2,-y_3,-z_1,z_2,-z_3,z_4,-t).$$

\item[(2)] We define the matrix \(\alpha_j,\) with $G, q_i$ depending on the parameters $g_m,b_n.$

\item[(3)] We define the polynomials \(l_{ij}^k\) depending on parameters $r_p.$
Notice that these must be chosen with the right degree and \(\sigma\) sign.

\item[(4)] We write the polynomials that define the rank condition and a sequence $f$
containing their coefficients. Our goal is to determine relations on the $g_m,b_n,d$
such that those polynomial coefficients vanish.

\item[(5)] Using the matrix \(\alpha_j\) and \(l_{ij}^k,\) we compute the defining polynomials $F$ given by Section 2 (6),
on the variables \(x,y_1,y_2,y_3,z_1,z_2,z_3,z_4,t\).

\item[(6)] Now we compute $${\rm LinearElimination} (f\ {\rm cat}\ F,v),$$
 with
$$v:=\left(r_1,\ldots,r_{371},g_1,\ldots,g_{10},b_1,\ldots,b_{12},d\right).$$
(Here cat denotes concatenation.)

This eliminates all polynomials from the sequence $f$ and outputs the polynomials
$F$ now depending on (at most) 9 of the parameters $d,g_i,b_j$ and some of the $r_m.$

\item[(7)] 
The equations $F_1=\cdots=F_5=0$ of degree $\leq 5$ do not depend on the $r_m,$
and we check that, on the equations of degree $>5,$
the $r_m$ appear as $r_mG_m$ with $G_m$ in the ideal $\langle F_1,\ldots,F_5\rangle .$
The $F_j=0$ can then be used to eliminate all the $r_m,$ thus we can take $r_m=0.$
This gives the final equations depending on (at most) 9 of the parameters $d,g_i,b_j$.\\

\end{itemize}

The arXiv ancillary file \verb+1_TheAlgorithm_alpha_1_c_1.pdf+
contains a Magma implementation of this algorithm for the case $\alpha_1$ with $c=1$ of Theorem \ref{ThmMatrix},
which gives a 9-parameter family of surfaces of general type with invariants
$p_g=1, q=0$ and $K^2=2$ (with a fixed point free involution).

The computations for the other cases $\alpha_i,c=j$ are given in the files\newline \verb+1_The_Algorithm_alpha_i_c_j.txt+.

\section{The moduli space}

Denote by $\mathcal M$ the moduli space of numerical Godeaux surfaces with torsion group $\mathbb Z/2,$
and let $\mathcal M_i^j$ be the subset of $\mathcal M$ corresponding to the matrix $\alpha_i$ with $c=j$
of Theorem $\ref{ThmMatrix}.$

\begin{theorem}\label{SpacesM}
The space $\mathcal M_1^1$ is dominated by an open dense subset of the $8$-dimensional weighted projective space $\mathbb P(1,1,2,2,2,3,3,4,4).$

We have $\mathcal M_2^0=\mathcal M_3^0=\emptyset.$
The spaces $\mathcal M_1^0,$ $\mathcal M_2^1$ and $\mathcal M_3^1$ are at most $7$-dimensional, and are contained in the closure of $\mathcal M_1^1.$

\end{theorem}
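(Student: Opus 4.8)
The plan is to carry out a parameter-counting argument for each of the five strata $\mathcal M_i^j$ appearing in Theorem \ref{ThmMatrix}, using the explicit equations produced by the algorithm of Section \ref{algo}. For $\mathcal M_1^1$, the algorithm already outputs a matrix $\alpha_1$ (with $c=1$) whose entries $G, q_1, q_2, q_3, q_4$ depend on the nine free parameters $d, g_9, b_2, b_5, b_6, b_8, b_9, b_{11}, b_{12}$ after all the $r_m$ have been shown to be removable. These nine parameters have natural weights under the residual $\mathbb{C}^*$-rescalings of the variables $x, y_1, y_2, y_3$ that are not already used up (the weighted-action calculation is exactly the kind done at the ends of the proofs of Propositions \ref{signalpha}, \ref{PropM} and Theorem \ref{ThmMatrix}), and I would check that these weights are $(1,1,2,2,2,3,3,4,4)$, giving a rational map from the $8$-dimensional $\mathbb P(1,1,2,2,2,3,3,4,4)$ to $\mathcal M$. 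That this map dominates $\mathcal M_1^1$ is immediate from the construction; one then invokes Section \ref{CD} (5) to see that a general such $\alpha_1$ does define a minimal surface of general type with $K^2=2, p_g=1, q=0$ whose quotient by $\sigma$ is a $\mathbb Z/2$-Godeaux surface, so the image indeed lands in $\mathcal M$.

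For the vanishing statements $\mathcal M_2^0 = \mathcal M_3^0 = \emptyset$, I would run the same algorithm on the matrices $\alpha_2, c=0$ and $\alpha_3, c=0$ and show that the rank condition (RC) forces the system to be inconsistent — equivalently, that imposing (RC) together with the requirement that $\det(\alpha)$ be irreducible and that the canonical curve not degenerate to a double curve (as in Lemma \ref{LemmaDoubleCan} and the proof of Proposition \ref{signalpha}) has no solution. In practice this is the Magma computation recorded in the ancillary files \verb+1_The_Algorithm_alpha_i_c_j.txt+: the output list $f$ never becomes empty, or becomes empty only after setting a parameter that was supposed to be free (e.g. $x$) to zero, i.e. the surface degenerates. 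For the three remaining strata $\mathcal M_1^0$, $\mathcal M_2^1$, $\mathcal M_3^1$, the algorithm produces families depending on at most $8$ parameters before quotienting by the residual rescalings, hence at most $7$ after, which gives the dimension bound.

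The last and most delicate point is the inclusion of $\mathcal M_1^0$, $\mathcal M_2^1$, $\mathcal M_3^1$ in the closure of $\mathcal M_1^1$. Here I would argue as follows. The matrix forms $\alpha_1, \alpha_2, \alpha_3$ of Theorem \ref{ThmMatrix} correspond to the three cases $Q = y_1^2 - y_2^2 - d^2 y_3^2$ being a smooth conic ($d \neq 0$, which is the $\alpha_1, \alpha_2$ generic situation after the changes of variables linking them) versus a pair of lines ($d = 0$, case $\alpha_3$); and the proof of Theorem \ref{ThmMatrix} already exhibits explicit $\mathrm{GL}_6$-conjugations (the matrices $P$, $R$, $P'$ there) converting one normal form into another when the relevant parameter is nonzero. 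So $\alpha_2, c=1$ and $\alpha_3, c=1$ and $\alpha_1, c=0$ are limits, in the space of admissible matrices, of the generic member of $\alpha_1, c=1$: concretely, one lets $d \to 0$, or lets $c \to 0$, along a one-parameter family of matrices satisfying (RC), and checks — again by the algorithm, or directly — that the limiting matrix lies in the stratum in question. Because the moduli space $\mathcal M$ is (by Gieseker) separated and of finite type, a one-parameter flat family of canonical models whose generic fibre lies in $\mathcal M_1^1$ and whose special fibre lies in $\mathcal M_i^j$ witnesses the desired containment in the closure. The main obstacle I anticipate is purely bookkeeping: one must verify that the degenerations $d\to 0$ and $c\to 0$ keep the surface inside the locus where Section \ref{CD} (5) applies (irreducible octic, no double canonical curve), so that the limit is a genuine point of $\mathcal M$ and not a spurious degeneration; this is exactly what Lemma \ref{LemmaDoubleCan} and the irreducibility of $\det(\alpha)$ are there to rule out, and it has to be re-checked along each degenerating family rather than just at the generic point.
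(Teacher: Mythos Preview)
Your outline for $\mathcal M_1^1$ is essentially the paper's argument: the nine parameters carry a $\mathbb C^*$-action with the stated weights, detected concretely by the identity $D=D(y_0/u,y_1,y_2,y_3/u,ub_5,ub_9,\ldots)$ on the octic. So far so good.

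There are two genuine gaps after that.

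\textbf{The dimension bound for $\mathcal M_1^0$ and $\mathcal M_2^1$.} You assert that the algorithm produces ``at most $8$ parameters before quotienting by the residual rescalings, hence at most $7$ after''. In fact the algorithm yields nine parameters for $\mathcal M_1^0$ and $\mathcal M_2^1$ as well, so after the $\mathbb C^*$-quotient these strata are a priori $8$-dimensional, just like $\mathcal M_1^1$. The paper gets down to $\leq 7$ by a further, non-obvious step: it finds an explicit one-parameter family of projective changes of coordinates on $\mathbb P^3$ under which distinct parameter values give the \emph{same} octic surface (equation~(\ref{polycf}) and the ancillary files). This hidden projective equivalence is the missing idea; your parameter count alone does not see it.

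\textbf{The closure inclusion.} Your plan is to build explicit one-parameter degenerations (letting $d\to 0$ or $c\to 0$) landing in each smaller stratum. The paper does something quite different and much cleaner: it invokes the Kuranishi--Wavrik lower bound, which says every irreducible component of $\mathcal M$ has dimension $\geq 8$. Since $\mathcal M_1^0,\mathcal M_2^1,\mathcal M_3^1$ are all $\leq 7$-dimensional, none of them can be (or contain) a component on its own, so each must lie in the closure of the unique $8$-dimensional piece $\mathcal M_1^1$. This argument also pins down $\dim\mathcal M_1^1=8$ exactly, which your outline leaves as ``$\leq 8$ and presumably equal''. Your degeneration approach could perhaps be made to work, but you would have to exhibit, for \emph{every} point of each smaller stratum, a smoothing into $\mathcal M_1^1$ --- not just a single degeneration in the other direction --- and you have not indicated how to do this.

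A smaller point: for the emptiness of $\mathcal M_2^0$ and $\mathcal M_3^0$, the rank condition is \emph{not} inconsistent. Rather, for $\alpha_3,c=0$ one computes that $\det(\alpha)$ is a perfect square (hence reducible), and for $\alpha_2,c=0$ the resulting equations always pass through a fixed point of $\mathbb P^8$, which would be a base point of the tricanonical map --- impossible for these invariants. The second obstruction is not among the ones you listed.
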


\begin{proof}
The computations of Section \ref{thealgorithm} give a matrix $\alpha_1, c=1$ whose entries are polynomials on the variables
$y_0=x^2,y_1,y_2,y_3$ with coefficients depending on $9$ parameters $b_5,b_9,b_6,b_8,d,b_2,b_{11},g_9,b_{12}.$
Each determinant $D:=\det(\alpha_1)$ gives an octic surface in $\mathbb P^3$
which, for general values of the parameters, is the image of $Y$ by its bicanonical map,
where $Y$ is the surface that has also been computed by the algorithm.
We check that, for any nonzero constant $u,$ we have
$$D=D(y_0/u,y_1,y_2,y_3/u,ub_5,ub_9,u^2b_6,u^2b_8,u^2d,u^3b_2,u^3b_{11},u^4g_9,u^4b_{12})$$
(see the arXiv ancillary file \verb+4_ItIsWeightedProjSpace_alpha_1_c_1.txt+).\newline
Therefore, the octics corresponding to $(b_5,b_9,b_6,b_8,d,b_2,b_{11},g_9,b_{12})$ and
$$(ub_5,ub_9,u^2b_6,u^2b_8,u^2d,u^3b_2,u^3b_{11},u^4g_9,u^4b_{12})$$ are identified by the change of variables
$(y_0/u,y_1,y_2,y_3/u).$ This implies that the above family of octic surfaces is parametrized by
$\mathbb P(1,1,2,2,2,3,3,4,4).$

For $\mathcal M_3^1$ the equations depend only on 8 parameters,
and we have a similar action giving the structure of a projective space, hence
$\mathcal M_3^1$ is of dimension at most 7.
In the case of $\mathcal M_1^0$ the equations depend on 9 parameters, but we have the following 2-dimensional action:
$$D=D(y_0/k,y_1,y_2,y_3/u,kb_5,kb_9,kub_6,k^2b_8,u^2d,ku^2b_2,ku^2b_{11},k^2u^2g_9,ku^3b_{12}).$$
This implies that $\mathcal M_1^0$ is of dimension at most 7.
(See the corresponding arXiv ancillary files.)

We show that the space $\mathcal M_2^1$ is also projective, $\mathbb P(1,1,1,2,2,2,3,3,4)$
with variables $e,b_4,b_9,b_5,b_8,b_{12},b_1,b_7,g_7$ $(e=-1/(2d)).$
Since $e\ne 0,$ we can consider $e=1.$
The corresponding family of octic surfaces in $\mathbb P^3$
is given by a single polynomial $$D_{b_4,b_9,b_5,b_8,b_{12},b_1,b_7,g_7}(y_0,y_1,y_2,y_3),$$ of degree 8 on the $y_i.$
We want to know if there are two such octics that are projectively equivalent,
i.e. if there are parameters $b_4',b_9',b_5',b_8',b_{12}',b_1',b_7',g_7'$ and a change of variables $Y_i$ such that the
coefficients of the polynomial equation
\begin{multline}\label{polycf}
F(y_0,y_1,y_2,y_3):=D_{b_4,b_9,b_5,b_8,b_{12},b_1,b_7,g_7}(Y_0,Y_1,Y_2,Y_3)
\\
-D_{b_4',b_9',b_5',b_8',b_{12}',b_1',b_7',g_7'}(y_0,y_1,y_2,y_3)=0
\end{multline}
vanish. From computer experiments we were able to find a 1-dimensional change of variables $Y_i:=y_i(k)$ and polynomials
$b_4',b_9',b_5',b_8',b_{12}',b_1',b_7',g_7'$
on the parameters $k,b_4,b_9,b_5,b_8,b_{12},b_1,b_7,g_7$ such that equation (\ref{polycf}) holds (see the arxiv ancillary file \verb+6_Dimension_alpha_2_c_1.txt+).
This implies that the dimension of $\mathcal M_2^1$ is at most 7.

It is easy to check that for $c=0$ the determinant of the matrix $\alpha_3$ is a square, hence $\mathcal M_3^0=\emptyset.$
The computations for the case $\alpha_2,c=0$ give equations that contain the point $(0:0:0:1:0:0:0:0:0).$
Since this would be a base point for the tricanonical map,
this is not possible for surfaces with $p_g=1,$ $q=0,$ $K^2=2.$ Thus $\mathcal M_2^0=\emptyset.$

By the results of Kuranishi \cite{Kuranishi} and Wavrik \cite{Wavrik} (as explained in \cite{Cat_Moduli}),
the number of moduli of each $\mathbb Z/2$-Godeaux surface is at least 8.
Since all the $\mathcal M_i^j$ are of dimension $\leq 7$ except for $i=j=1$,
then the dimension of $\mathcal M_1^1$ is 8 and
the spaces $\mathcal M_1^0,$ $\mathcal M_2^1,$ $\mathcal M_3^1$ must be contained in the closure of $\mathcal M_1^1.$

\end{proof}

\begin{corollary}
The moduli space of numerical Godeaux surfaces with torsion group $\mathbb Z/2$ is irreducible and unirational of dimension 8.
The topological fundamental group of these surfaces is also $\mathbb Z/2.$
\end{corollary}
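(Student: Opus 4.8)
The plan is to read the Corollary off Theorem~\ref{SpacesM}, using the Catanese--Debarre classification recalled in Section~\ref{CD} to know that the $\mathcal M_i^j$ exhaust $\mathcal M$, together with one classical input on fundamental groups of families of surfaces.

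First I would check that $\mathcal M=\bigcup_{i=1}^{3}\bigcup_{j=0}^{1}\mathcal M_i^j$: for any $\mathbb Z/2$-Godeaux surface, the canonical ring of its \'etale double cover is resolved by a matrix $\alpha$ of the form in Section~\ref{CD}\,(3), and by Theorem~\ref{ThmMatrix} any such $\alpha$ becomes, after a coordinate change, one of the $\alpha_i$ with $c\in\{0,1\}$. By Theorem~\ref{SpacesM}, $\mathcal M_2^0=\mathcal M_3^0=\emptyset$ and $\mathcal M_1^0,\mathcal M_2^1,\mathcal M_3^1\subseteq\overline{\mathcal M_1^1}$, so $\mathcal M=\overline{\mathcal M_1^1}$ (closure taken in $\mathcal M$). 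Since $\mathcal M_1^1$ is dominated by an open dense subset of $\mathbb P(1,1,2,2,2,3,3,4,4)$, which is irreducible, rational and $8$-dimensional, the set $\mathcal M_1^1$ is irreducible of dimension $8$, hence so is its closure $\mathcal M$; and composing the dominant rational map $\mathbb P(1,1,2,2,2,3,3,4,4)\dashrightarrow\mathcal M_1^1$ with the dense inclusion $\mathcal M_1^1\subseteq\mathcal M$ shows that $\mathcal M$ is unirational.

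For the fundamental group, I would first note that $q=0$ forces $H_1(S,\mathbb Z)$ to be the torsion group $\mathbb Z/2$, so there is a surjection $\pi_1(S)\twoheadrightarrow\mathbb Z/2$ and it suffices to prove that the \'etale double cover $\widetilde S$ is simply connected. Because $\mathcal M$ is irreducible all $\mathbb Z/2$-Godeaux surfaces are deformation equivalent, and $\pi_1$ is a deformation invariant, so it is enough to compute $\pi_1(\widetilde S)$ for one member. Choosing a general member of $\mathcal M_1^1$, one verifies with the ancillary files that its canonical model is smooth, so that $\widetilde S$ is a smooth octic surface in $\mathbb P^3$; by the Lefschetz hyperplane theorem $\pi_1(\widetilde S)=1$, and hence $\pi_1(S)=\mathbb Z/2$ for every $\mathbb Z/2$-Godeaux surface.

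The irreducibility, dimension and unirationality assertions are then formal consequences of Theorems~\ref{ThmMatrix} and~\ref{SpacesM}. I expect the only delicate point to be the fundamental-group step: one must be sure that a general member of $\mathcal M_1^1$ really is smooth minimal with smooth bicanonical octic, and that an arbitrary $\mathbb Z/2$-Godeaux surface is joined to it inside $\mathcal M$ by a family along which $\pi_1$ of the minimal models is locally constant --- which is where one needs to control the rational double points on the canonical models and invoke the invariance of $\pi_1$ under their simultaneous resolution. Granting that, the Lefschetz input is routine and the Corollary follows.
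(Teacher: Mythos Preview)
Your deduction of irreducibility, unirationality and dimension $8$ from Theorems~\ref{ThmMatrix} and~\ref{SpacesM} is essentially the paper's own argument, spelled out in more detail; that part is fine.

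The fundamental-group step, however, contains a genuine error. The bicanonical map of the \'etale double cover $\widetilde S$ is only a \emph{birational morphism} onto the octic in $\mathbb P^3$ (Section~\ref{CD}\,(1),(5)); it is never an embedding. Indeed, a smooth octic hypersurface $X\subset\mathbb P^3$ has $K_X=4H|_X$ and hence $K_X^2=16\cdot 8=128$, whereas $K_{\widetilde S}^2=2$, so $\widetilde S$ cannot be isomorphic to a smooth octic and the Lefschetz hyperplane theorem is not available. Smoothness of the canonical model (which you propose to check with the ancillary files) says only that $\Proj(\mathcal R)$ is smooth in its natural weighted projective embedding, not that the bicanonical image in $\mathbb P^3$ is smooth --- and in fact the octic is always singular along the conic $x=Q=0$.

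The paper closes this gap differently: once $\mathcal M$ is irreducible, deformation invariance of $\pi_1$ reduces the question to a single example, and the paper simply cites \cite{Ba} for the existence of a $\mathbb Z/2$-Godeaux surface whose topological fundamental group is already known to be $\mathbb Z/2$. If you want a self-contained argument, you would need some other way to show $\pi_1(\widetilde S)=1$ for one member of the family (e.g.\ a direct computation on an explicit smooth canonical model, or a degeneration argument), not Lefschetz on the octic.
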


\begin{proof}
The first part is immediate from Theorem \ref{SpacesM}.
For the second part it suffices to note that there exist $\mathbb Z/2$-Godeaux surfaces with topological fundamental group $\mathbb Z/2,$
see \cite{Ba}.
\end{proof}

\section{Two special surfaces}

Borisov and Fatighenti \cite{BF} give the equations of a surface $X$ with an action of $\mathbb Z/3$ such that
the surface $Y:=X/(\mathbb Z/3)$ is an \'etale double covering of a $\mathbb Z/2$-Godeaux surface with 4 cusps,
which in turn is a $(\mathbb Z/3)^2$-quotient of a fake projective plane.

The surface $X$ is embedded in $\mathbb P^7$ by its bicanonical map,
and the action of $\mathbb Z/3$ is $$(x_0:x_1:x_2:x_3:x_4:x_5:x_6:x_7)\mapsto (x_0:x_2:x_3:x_1:x_5:x_6:x_4:x_7).$$
The map given by $(x_0:x_1+x_2+x_3:x_4+x_5+x_6:x_7)$ sends $X$ to the bicanonical image of $Y,$
an octic surface in $\mathbb P^3.$ Magma gives the equation of this surface, and with some computations
we get its coordinates in our family (case $\alpha_1,$ $c=1$):
$$(b_5,b_9,b_6,b_8,d,b_2,b_{11},g_9,b_{12})=$$
$$( 36r+36,64,-360r+1752,360r+4392,-30r-366,-10176r-45504,$$ $$20976r+78960,238008r+1635576,-383328r+867744 ),$$
with $r=\sqrt{-15}.$

Now with computations analogous to the ones in the file\newline \verb+3_Verifications_alpha_1_c_1.txt+, one can see that the surface $Y$ is as claimed.\\

Borisov and Yeung \cite{BY} give the equations of a $\mathbb Z/3$-quotient $Z$ of the Cartwright-Steger surface,
and they show that $Z$ is an \'etale double covering of a $\mathbb Z/2$-Godeaux surface. The surface $Z$ has 6 cusp singularities and
contains 3 disjoint $(-3)$-curves. Proceeding as above,
we find the image of $Z$ by its bicanonical map, and then we compute its coordinates in our family (case $\alpha_1,$ $c=1$):
$$(b_5,b_9,b_6,b_8,d,b_2,b_{11},g_9,b_{12})=\left(-60, 40, -120, -302, 9, 252, 360, 15903, 648\right).$$
Again with computations analogous to the ones in the file\newline \verb+3_Verifications_alpha_1_c_1.txt+, one can check the invariants of $Z$
and its singularities.

\bibliography{References_V10}

\vspace{0.8cm}

\noindent Eduardo Dias
\vspace{0.1cm}
\\ Departamento de Matem\' atica
\\ Faculdade de Ci\^encias da Universidade do Porto
\\ Rua do Campo Alegre 687
\\ 4169-007 Porto, Portugal
\\ www.fc.up.pt, {\tt eduardo.dias@fc.up.pt}\\

\vspace{0.3cm}

\noindent Carlos Rito
\vspace{0.1cm}
\\ Centro de Matem\'atica, Universidade do Minho - Polo CMAT-UTAD
\\ Universidade de Tr\'as-os-Montes e Alto Douro, UTAD
\\ Quinta de Prados
\\ 5000-801 Vila Real, Portugal
\\ www.utad.pt, {\tt crito@utad.pt}

\end{document}